\newcommand{\mini}{\mathop{\rm min.}\limits}
\newcommand{\maxi}{\mathop{\rm max.}\limits}
\newcommand{\st}{{\rm s.~t.~}}
\newcommand{\vt}[1]{\boldsymbol{#1}}
\newcommand{\bb}[1]{\mathbb{#1}}
\newcommand{\xmark}{\ding{55}}   % バツマーク
\theoremstyle{plain}
	  \newtheorem{thm}{Theorem}
	  \newtheorem{lemma}{Lemma}
\theoremstyle{definition}
	  \newtheorem{ass}{Assumption}
	  \newtheorem{axiom}{Axiom}
\theoremstyle{remark}
\journal{Elsevier}
\begin{document}
\begin{frontmatter}

  % -------- Title, Authors, Abstract, Keywords -------------------------------- %
%\section*{[Dummy] Title, Authors, Abstract, Keywords}
\title{%
    An Axiomatic Analysis of Distributionally Robust Optimization with $q$-Norm Ambiguity Sets for Probability Smoothing
    %\tnotemark[1]
}

%\tnotetext[1]{Minor revision. Assumption~\ref{ass:don-degen} in Lemma~\ref{lem:lambda0} has been removed, as the result holds without it.}

\author[econ]{Yoichi Izunaga\corref{cor}}
\ead{izunaga@econ.kyushu-u.ac.jp}

\author[jgmi]{Kota Kurihara}
\ead{kurihara.kota.881@s.kyushu-u.ac.jp}

\author[econ_grad]{Hokuto Nagano}
\ead{nagano.hokuto.397@s.kyushu-u.ac.jp}

\author[econ_grad]{Daiki Uchida}
\ead{uchida.daiki.098@s.kyushu-u.ac.jp}

\address[econ]{%
    Faculty of Economics,
    Kyushu University,
    Fukuoka,
    Japan
}

\address[jgmi]{%
    Joint Graduate School of Mathematics for Innovation,
    Kyushu University,
    Fukuoka,
    Japan
}

\address[econ_grad]{%
    Graduate School of Economics,
    Kyushu University,
    Fukuoka,
    Japan
}

\cortext[cor]{Corresponding author.}

%----- abstract -------------------------------
\begin{abstract}
We analyze the axiomatic properties of a class of probability estimators derived from Distributionally Robust Optimization (DRO) with $q$-norm ambiguity sets ($q$-DRO), a principled approach to the zero-frequency problem.
While classical estimators such as Laplace smoothing are characterized by strong linearity axioms like Ratio Preservation, we show that $q$-DRO provides a flexible alternative that satisfies other desirable properties.
We first prove that for any $q \in [1, \infty]$, the $q$-DRO estimator satisfies the fundamental axioms of Positivity and Symmetry.
For the case of $q \in (1, \infty)$, we then prove that it also satisfies Order Preservation.
Our analysis of the optimality conditions also reveals that the $q$-DRO formulation is equivalent to the regularized empirical loss minimization.
\end{abstract}

\begin{keyword}
Distributionally Robust Optimization, Probability Smoothing, Axiomatic Analysis, Regularized Empirical Loss Minimization
\end{keyword}
\end{frontmatter}

%----Introduction----------------------
\section{Introduction}\label{sec:intro}

The estimation of probabilities from finite data is a fundamental task of machine learning, statistics, and information theory.
A common and persistent challenge in this task is the zero-frequency problem: if an event is not observed in a finite sample,
its probability is naively estimated as zero, leading to poor generalization and model failure (e.g.,~\citet{CG_CSL1999, WB_IEEE2002}). 
This issue is critical in diverse fields, from natural language processing, where unseen $N$-grams cause serious problems for language models~\citep{MS_FSNLP1999}, to risk management, where the possibility of unobserved catastrophic events must be accounted for.

The classical remedy for this problem is Laplace smoothing (or Add-one smoothing), a simple technique that adds a pseudocount to every category.
While effective, its justification was long considered heuristic.
Recently, this perspective has been challenged by an axiomatic characterization proving that Laplace smoothing is the unique method satisfying a set of four intuitive axioms: Positivity, Symmetry, Order Preservation, and Ratio Preservation~\citep{S_MSS2025}.
However, this characterization also highlighted a crucial limitation.
The Ratio Preservation axiom imposes a strong linear structure on the estimator, which can be overly rigid for complex, real-world data.

This rigidity has a clear interpretation within a Bayesian framework.
It is well-established that Laplace smoothing is mathematically equivalent to a Bayesian posterior mean when assuming a uniform prior distribution over the space of all possible probability distributions~\citep{MS_FSNLP1999}.
This prior embodies the simple belief that all probability distributions are a priori equally likely.
The rigidity of Laplace smoothing is, therefore, a direct consequence of the simplicity of its underlying prior belief.

The rigidity of the classical approach raises a critical question:
Can we design a more flexible and principled smoothing method that is not bound by such a rigid prior,
yet still satisfies the most desirable axiomatic properties?

This paper provides an affirmative answer by leveraging the framework of Distributionally Robust Optimization (DRO).
Instead of specifying an explicit prior,
DRO formulates estimation as a min-max game against an adversary who selects the worst-case probability distribution from an ambiguity set centered around the empirical distribution~\citep{BN_RO2009, KSW_DRO2025}.
We specifically analyze a DRO model where the ambiguity set is defined by the $q$-norm (hereafter, $q$-DRO).

Our contributions are as follows:
\begin{itemize}
  \item We formulate the $q$-DRO smoothing problem and show that it can be reformulated as a single convex conic optimization problem.
  \item We provide an axiomatic analysis of the $q$-DRO estimator.
  We prove that it satisfies the fundamental axioms of Positivity and Symmetry for all $q \in [1,\infty]$. Our main axiomatic result is a proof that for $q \in (1, \infty)$, the estimator also satisfies Order Preservation, under a mild assumption reflecting a non-trivial problem setting.
  \item We show that the $q$-DRO formulation can be interpreted as a form of regularized empirical loss minimization. While the equivalence between DRO and regularized empirical loss minimization is well established in the literature~\citep{SMPK_NIPS2015, DN_AS2021}, our contribution lies in identifying the specific regularization structure induced by a $q$-norm ambiguity set on the probability simplex.
\end{itemize}

Our work bridges three distinct fields, robust optimization, axiomatic analysis, and regularized empirical loss minimization, to present DRO as a principled framework for designing estimators that are robust, axiomatically sound, and theoretically justified.

The remainder of this paper is organized as follows.
Section~\ref{sec:prelim} reviews the existing axiomatic approach to probability smoothing and formally introduces our $q$-DRO framework.
Section~\ref{sec:reformulation} demonstrates that the proposed $q$-DRO problem can be reformulated as a tractable convex conic optimization problem.
Section~\ref{sec:main_results} presents our main theoretical results, establishing that the $q$-DRO estimator satisfies the fundamental axioms.
Section~\ref{sec:discussion} discusses theoretical implications, including the validity of our assumptions and the connection to regularized empirical loss minimization.
Section~\ref{sec:numerical_examples} presents numerical examples to validate our theoretical findings and illustrates the behavior of the estimator.
Finally, Section~\ref{sec:conclusion} concludes the paper.

%---Preliminaries---%
\section{Preliminaries: From Axiomatic Smoothing to a Distributionally Robust Formulation}\label{sec:prelim}

%---Axiomatic approach
\subsection{The Axiomatic Approach to Probability Smoothing}
Let $N=\{1,2,\ldots,n\}$ be a set of categories.
A probability distribution is a vector $\vt{p}$ in the probability simplex $\Delta^{n}=\{ \vt{p} \in \bb{R}^{n} \mid \sum_{j=1}^{n}p_{j}=1, p_{j} \ge 0\, (j \in N) \}$.
A smoothing function is a map $f: D \to \text{ri}(\Delta^{n})$ such that
\begin{align*}
  f(\vt{p})=(f_{1}(\vt{p}), f_{2}(\vt{p}), \ldots, f_{n}(\vt{p})) \quad \text{for any } \vt{p} \in D,
\end{align*}
where $D \subset \Delta^{n}$ is the domain of empirical distributions,
typically those with at least one zero component\footnote{Regarding the domain $D$, \citet{S_MSS2025} states that ``it refers to any non-empty subset of $\Delta^{n}$, without any additional assumptions imposed.'' However, if the uniform distribution is included in $D$, the characterization does not hold.},
and $\text{ri}(\Delta^{n})$ is the relative interior of the simplex, ensuring that $f_{j}(\vt{p})>0$ for all $j \in N$.
Laplace smoothing is a smoothing function such that for any empirical distribution $\hat{\vt{p}} \in D$,
\begin{align*}
  f_{j}(\hat{\vt{p}})=\frac{\hat{p}_{j}+c}{1+n\cdot c} \quad (j \in N),  
\end{align*}
where $c > 0$ is a user-specified parameter as the pseudocount.

We consider the following axioms from~\citep{S_MSS2025}.
\begin{axiom}{(Positivity)}
  For any $\hat{\vt{p}} \in D$ and $i \in N$, $f_{i}(\hat{\vt{p}})>0$.
\end{axiom}

\begin{axiom}{(Symmetry)}
  For any $\hat{\vt{p}} \in D$ and $i,j \in N$, if $\hat{p}_{i}=\hat{p}_{j}$, then $f_{i}(\hat{\vt{p}})=f_{j}(\hat{\vt{p}})$.
\end{axiom}

\begin{axiom}{(Order Preservation)}
  For any $\hat{\vt{p}} \in D$ and $i,j \in N$, if $\hat{p}_{i}<\hat{p}_{j}$, then $f_{i}(\hat{\vt{p}})<f_{j}(\hat{\vt{p}})$.
\end{axiom}

\begin{axiom}{(Ratio Preservation)}
  For any $\hat{\vt{p}} \in D$ and $i,j,k \in N$, if $\hat{p}_{i} \neq \hat{p}_{j}, \hat{p}_{k}$, then 
  \begin{align*}
    \frac{f_{i}(\hat{\vt{p}})-f_{j}(\hat{\vt{p}})}{\hat{p}_{i}-\hat{p}_{j}}
    &=\frac{f_{i}(\hat{\vt{p}})-f_{k}(\hat{\vt{p}})}{\hat{p}_{i}-\hat{p}_{k}}.
  \end{align*}
\end{axiom}

It is obvious that ratio preservation implies symmetry (by Lemma~1 in~\citep{S_MSS2025}),
and~\citep{S_MSS2025} showed that the only method satisfying positivity, order preservation, and ratio preservation is Laplace smoothing.
The ratio preservation axiom, however, imposes a strong linear structure on the estimator, which may not be suitable for some applications.

%---DRO framework
\subsection{A Principled Alternative: Distributionally Robust Optimization}

We consider an alternative approach based on DRO, which does not rely on pre-specified behavioral axioms. DRO approaches the smoothing problem by directly modeling uncertainty in the empirical distribution $\hat{p} \in \Delta^n$~\citep{KSW_DRO2025}.
It frames the problem as a game between a player and an adversary.
The player chooses an estimator $\vt{x}$, while the adversary chooses the "true" distribution $\vt{p}$ from an ambiguity set $\mathcal{U}$ of distributions close to $\hat{\vt{p}}$, with the goal of maximizing the player's loss $\mathcal{L}(\vt{x},\vt{p})$.
Hence, the player's problem is formulated as finding the estimator that minimizes this worst-case loss:
\begin{align*}
  \underset{\vt{x} \in \Delta^{n}}{\mini} \underset{\vt{p} \in \mathcal{U}(\hat{\vt{p}})}{\maxi} \mathcal{L}(\vt{x},\vt{p}).
\end{align*}

While this min-max formulation is more formally described as a two-person zero-sum game, we refer to it as DRO throughout this paper. 
This aligns our work with the common paradigm in machine learning where DRO is often interpreted as a form of regularized empirical loss minimization, a connection we will make explicit in Section~\ref{sec:discussion}.

%---Our Formulation
\subsection{Our DRO Formulation for Probability Smoothing}
In this paper, we specify the components of the DRO game as follows. The player's loss $\mathcal{L}$ is measured by the cross-entropy loss
\begin{align*}
  \mathcal{L}(\vt{x},\vt{p})=\sum_{j=1}^{n}p_{j}(-\log{x_{j}}).
\end{align*}
This choice is well-motivated by its connection to the principle of Maximum Likelihood Estimation.
From an information-theoretic perspective, it corresponds to minimizing the Kullback-Leibler (KL) divergence from the empirical distribution $\hat{\vt{p}}$ to the model distribution $\vt{x} \in \Delta^{n}$~\citep{CT_EIT2006}.
It is also motivated by the work in natural language processing (e.g., \citet{BPP_CL1999}), which successfully used information-theoretic objectives for probability estimation.
Intuitively, the logarithmic term penalizes any assignment of zero probability with an infinite loss, thus structurally enforcing the goal of smoothing: to avoid zero-probability estimates.

The ambiguity set is defined by perturbations to the empirical distribution.
We introduce a perturbation $e_{j}$ for each category $j \in N$ to represent the deviation from the empirical probability, such that the probability is defined as $p_{j}=\hat{p}_{j}+e_{j}$ for $j \in N$.
The ambiguity set is then formed by all such distributions $\vt{p}=(p_{j})_{j \in N}$ where the magnitude of the perturbation is bounded by a $q$-norm $\|\vt{e}\|_{q}=\left( \sum_{j=1}^{n}|e_{j}|^{q} \right)^{1/q}$ for any $q \in [1,\infty]$:
\begin{align}  
  \mathcal{U}_{q}(\hat{\vt{p}}, \varepsilon)=\left\{\, \vt{p} \in \Delta^{n} \mid p_{j}=\hat{p}_{j}+e_{j}\,(j \in N), \|\vt{e}\|_{q} \le \varepsilon \,\right\}. \label{eq:ambiguity1}
\end{align}
Here, $\varepsilon > 0$ is a user-specified parameter, known as the robustness radius, that controls the size of the ambiguity set.
A larger $\varepsilon$ implies a higher degree of uncertainty about the empirical distribution, leading to a more robust and conservative estimator.
The problem is formulated as a min-max game, denoted by $q$-DRO,
\begin{align*}
  \underset{\vt{x} \in \Delta^{n}}{\mini} \underset{\vt{p} \in \mathcal{U}(\hat{\vt{p}},\varepsilon)}{\maxi} \left\{ \sum_{j=1}^{n}p_{j}(-\log{x_{j}}) \right\}.
\end{align*}

Since any distribution $\vt{p}$ must belong to the probability simplex $\Delta^{n}$, two conditions must hold: (i) $p_{j} \ge 0$ for all $j \in N$, and (ii) $\sum_{j=1}^{n}p_{j}=1$.
The first condition directly implies that the perturbations must satisfy $\hat{p}_{j}+e_{j} \ge 0$.
The second condition implies that the sum of the perturbations must be zero, as shown by a simple calculation:
\begin{align*}
  \sum_{j=1}^{n}p_{j}=\sum_{j=1}^{n}(\hat{p}_{j}+e_{j})=1+\sum_{j=1}^{n}e_{j}.
\end{align*}
For this to equal $1$, we must have $\sum_{j=1}^{n}e_{j}=0$.
Thus, the perturbation vector $\vt{e}$ is required to satisfy the following three constraints:
\begin{align}
  \hat{p}_{j}+e_{j} \ge 0 \; (j \in N),\quad \sum_{j=1}^{n}e_{j}=0,\quad \|\vt{e}\|_{q} \le \varepsilon. \label{eq:ambiguity2}
\end{align}
These constraints provide an explicit characterization of the ambiguity set in terms of the perturbation vector $\vt{e}$.

%---Reformulation of q-DRO---%
\section{Reformulation of $q$-DRO}\label{sec:reformulation}
Using the explicit characterization of the ambiguity set~\eqref{eq:ambiguity2}, the inner worst-case problem of the $q$-DRO formulation, for a fixed estimator $\vt{x} \in \Delta^{n}$, can be stated as:
\begin{subequations}\label{prob:wc}
  \begin{align}
    \underset{\vt{e}}{\maxi} \quad & \sum_{j=1}^{n}(\hat{p}_{j}+e_{j})(-\log{x_{j}}) \label{eq:wc_obj}\\
    \st \quad & \hat{p}_{j}+e_{j} \ge 0 \quad (j \in N), \label{eq:wc_con1}\\
    & \sum_{j=1}^{n}e_{j}=0, \label{eq:wc_con2}\\
    & \|\vt{e}\|_{q} \le \varepsilon. \label{eq:wc_con3}
  \end{align}
\end{subequations}
This is a convex optimization problem that satisfies Slater's condition, which guarantees that strong duality holds.
To demonstrate this, we only need to show that a strictly feasible point exists.
If all empirical probabilities are positive ($\hat{p}_{j}>0$ for all $j \in N$), then  the zero vector $\vt{e}=\vt{0}$ is a strictly feasible point.
If some categories have zero frequency (i.e., $\hat{p}_{j}=0$ for some $j \in N$), a strictly feasible point can be constructed by a small perturbation.
For instance, one can assign a sufficiently small positive probability to the zero-frequency categories, and subtract a corresponding amount from the positive-frequency categories such that the sum of perturbations remains zero. For a sufficiently small perturbation, all inequality constraints will be satisfied strictly.

Therefore, we leverage Lagrange duality to convert the inner worst-case problem into an equivalent minimization problem.
To derive the dual of the inner worst-case problem~\eqref{prob:wc}, we introduce a vector of nonnegative Lagrangian multiplier $\lambda_{j} \ge 0$ for the first constraint~\eqref{eq:wc_con1} ($\hat{p}_{j}+e_{j} \ge 0$) and a multiplier $\beta \in \bb{R}$ for the second constraint~\eqref{eq:wc_con2} ($\sum_{j=1}^{n}e_{j}=0$).
The Lagrangian $L(\vt{e},\vt{\lambda}, \beta)$ for the inner worst-case problem, explicitly retaining the norm constraint~\eqref{eq:wc_con3}, is as follows:
\begin{align*}
  L(\vt{e},\vt{\lambda}, \beta)
  &=\sum_{j=1}^{n}(\hat{p}_{j}+e_{j})(-\log{x_{j}})+\sum_{j=1}^{n}\lambda_{j}(\hat{p}_{j}+e_{j})-\beta(\sum_{j=1}^{n}e_{j})\\
  &=\sum_{j=1}^{n}(-\log{x}_{j}+\lambda_{j}-\beta)e_{j}+\sum_{j=1}^{n}\hat{p}_{j}(-\log{x_{j}}+\lambda_{j}).
\end{align*}
The Lagrange dual function $g(\vt{\lambda},\beta)$ is derived by maximizing $L(\vt{e},\vt{\lambda},\beta)$ with respect to $\vt{e}$ over the remaining constraint, $\|\vt{e}\|_{q} \le \varepsilon$:
\begin{align*} % one-column version
  g(\vt{\lambda},\beta) = \max & \left\{ \sum_{j=1}^{n}(-\log{x}_{j}+\lambda_{j}-\beta)e_{j} \mid \| \vt{e} \|_{q} \le \varepsilon \right\}+\sum_{j=1}^{n}\hat{p}_{j}(-\log{x_{j}}+\lambda_{j}).
\end{align*}
% \begin{align*} % two-column version
%   g(\vt{\lambda},\beta) = \max & \left\{ \sum_{j=1}^{n}(-\log{x}_{j}+\lambda_{j}-\beta)e_{j} \mid \| \vt{e} \|_{q} \le \varepsilon \right\}\\
%   &+\sum_{j=1}^{n}\hat{p}_{j}(-\log{x_{j}}+\lambda_{j}).
% \end{align*}
The above maximization problem can be solved analytically
in terms of the dual norm $\|\cdot\|_{q}^{\bullet}$ corresponding to $q$-norm (see, e.g.,~\citet{B_CO2004}), where the dual norm is defined as:
\begin{align}
  \|\vt{y}\|_{q}^{\bullet}=\max \{ \vt{y}^{\top}\vt{x} \mid \|\vt{x}\|_{q} \le 1 \}. \label{eq:dual_norm}
\end{align}
The term $\max \{ \vt{c}^{\top}\vt{e} \mid \|\vt{e}\|_{q} \le \varepsilon\}$ evaluates to $\varepsilon \cdot \|\vt{c}\|_{q}^{\bullet}$ by~\eqref{eq:dual_norm}.
This yields the closed-form expression for the dual function:
\begin{align*}
  g(\vt{\lambda},\beta) &= \sum_{j=1}^{n}\hat{p}_{j}(-\log{x_{j}}+\lambda_{j}) + \varepsilon \cdot \| -\log(\vt{x})-\beta\vt{1}+\vt{\lambda} \|_{q}^{\bullet},
\end{align*}
where $\vt{1}$ is a vector of all ones, $\log(\vt{x})$ denotes component-wise application of the logarithm, $\log(\vt{x}) = (\log x_1, \log x_2, \ldots, \log x_n)^\top$, and $\vt{\lambda}=(\lambda_1, \lambda_2, \ldots, \lambda_n)^\top$.

Since strong duality holds, the optimal value of the inner worst-case problem is equal to the minimum of the dual function $g(\vt{\lambda},\beta)$ over the dual variables $(\vt{\lambda},\beta)$.
Consequently, the original min--max problem reduces to a single minimization problem, yielding the following reformulation of $q$-DRO:
\begin{subequations}\label{prob:q-DRO} % one-column version
  \begin{align}
    \underset{(\vt{x},\vt{\lambda},\beta)}{\mini} \quad & \sum_{j=1}^{n}\hat{p}_{j}(-\log{x_{j}}+\lambda_{j}) + \varepsilon \cdot \| -\log(\vt{x})-\beta\vt{1}+\vt{\lambda} \|_{q}^{\bullet} \label{eq:q-DRO_obj}\\
    \st \quad & \sum_{j=1}^{n}x_{j}=1, \label{eq:q-DRO_con1}\\
    & \lambda_{j} \ge 0 \quad (j \in N), \label{eq:q-DRO_con2}\\
    & x_{j} \ge 0 \quad (j \in N). \label{eq:q-DRO_con3}
  \end{align}
\end{subequations}
%  \begin{subequations}\label{prob:q-DRO} % two-column version
%   \begin{align}
%     \underset{(\vt{x},\vt{\lambda},\beta)}{\mini} \quad & \sum_{j=1}^{n}\hat{p}_{j}(-\log{x_{j}}+\lambda_{j}) \notag\\
%     &+ \varepsilon \cdot \| -\log(\vt{x})-\beta\vt{1}+\vt{\lambda} \|_{q}^{\bullet} \label{eq:q-DRO_obj}\\
%     \st \quad & \sum_{j=1}^{n}x_{j}=1, \label{eq:q-DRO_con1}\\
%     & \lambda_{j} \ge 0 \quad (j \in N), \label{eq:q-DRO_con2}\\
%     & x_{j} \ge 0 \quad (j \in N). \label{eq:q-DRO_con3}
%   \end{align}
% \end{subequations}

The dual norm $\|\cdot\|_{q}^{\bullet}$ is given as follows:
\begin{align*}
  \|\vt{y}\|_{q}^{\bullet}&=
  \begin{cases}
    \|\vt{y}\|_{\infty} = \max \left\{ |y_{j}| \mid j=1,2,\ldots,n \right\} & (q=1)\\
    \|\vt{y}\|_{q^{*}}= \left( \sum_{j=1}^{n}|y_{j}|^{q^{*}} \right)^{1/q^{*}} & (q \in (1,\infty))\\
    \|\vt{y}\|_{1} = \sum_{j=1}^{n}|y_{j}| & (q=\infty)
  \end{cases},
\end{align*}
where $q^{*}$ is the dual exponent satisfying $1/q + 1/q^{*}=1$.

Although the $q^{*}$-norm function $g(\vt{u})=\|\vt{u}\|_{q^{*}}$ is convex for any $q^{*} \in [1,\infty]$, the composite function $g(\vt{u}(\vt{x},\beta, \vt{\lambda}))$ is not necessarily convex when $\vt{u}$ is defined as $\vt{u}(\vt{x},\beta, \vt{\lambda})=-\log(\vt{x})-\beta\vt{1}+\vt{\lambda}$.
Nevertheless, $q$-DRO can be formulated as a standard convex conic optimization problem by introducing auxiliary variables and conic constraints.
See \ref{app:reformulation} for details.
This reformulation enables efficient computation of a globally optimal solution using off-the-shelf solvers.

Specifically, the logarithmic terms $-\log x_j$ in the cross-entropy loss can be represented via exponential cone constraints.
Moreover, the $q^{*}$-norm term admits standard conic representations depending on $q$:
it reduces to linear constraints for $q\in\{1,\infty\}$,
and power cone constraints for general $q\in(1,\infty)$.
In particular, for $q=2$, it reduces to second-order cone constraints.
See~\citet{Mosek_cookbook} for more details.

%---Main Results---%
\section{Main Results: Axiomatic Properties of the $q$-DRO Estimator}\label{sec:main_results}

In this section, we analyze structural properties of optimal solutions of $q$-DRO.
From now on, we refer to an optimal solution $\vt{x}$ of $q$-DRO~\eqref{prob:q-DRO} as the $q$-DRO estimator.
We first establish Positivity for all $q\in[1,\infty]$.

\begin{thm}\label{thm:Pos}
  For any $q \in [1,\infty]$, any $q$-DRO estimator $\vt{x}$ satisfies Positivity, i.e., $x_j>0$ for all $j\in N$.
\end{thm}

\begin{proof}
Assume for contradiction that there exists an optimal solution $(\vt{x}, \beta, \vt{\lambda})$ such that $x_j = 0$ for some $j \in N$.
Note that a feasible solution with a finite objective value exists (e.g., $(\bar{\vt{x}}, \bar{\beta},\bar{\vt{\lambda}})=(\vt{1}/n, 0,\vt{0})$), so the optimal objective value implies finiteness.

First, if $\hat{p}_j > 0$, the term $\hat{p}_j (-\log x_j)$ diverges to $+\infty$, which contradicts the finiteness of the optimal value.
Thus, we assume $\hat{p}_j = 0$ and let $\vt{u} = -\log(\vt{x}) - \beta\vt{1} + \vt{\lambda}$. Since $x_j = 0$, the term $-\log x_j$ is $+\infty$.
For the norm $\|\vt{u}\|_{q^*}$ to remain finite, the component $u_j = -\log x_j - \beta + \lambda_j$ must be finite.
Since $\lambda_j \ge 0$, this requires $\beta$ to tend to $+\infty$ to counteract the divergence of $-\log x_j$.
However, for any index $k$ with $\hat{p}_k > 0$, we have $x_k > 0$ and $\lambda_k < +\infty$ to ensure the finiteness of the term $\hat{p}_k (-\log x_k + \lambda_k)$ in the objective.
Consider such an index $k$.
If $\beta \to +\infty$, then the component $u_k = -\log x_k - \beta + \lambda_k$ diverges to $-\infty$, since $-\log x_k$ and $\lambda_k$ are finite values.
Consequently, $|u_k| \to \infty$, causing $\|\vt{u}\|_{q^*} \to \infty$.
This contradicts the finiteness of the optimal value.
Therefore, there cannot exist an optimal solution with $x_j = 0$. Hence, $x_j > 0$ for all $j \in N$.
\end{proof}

We now analyze Symmetry and Order Preservation.
The behavior of the optimal solution $(\vt{x},\beta,\vt{\lambda})$ depends on whether the norm term $\| -\log(\vt{x})-\beta\vt{1}+\vt{\lambda} \|_{q^{*}}$ equals zero or not.

\subsection{Degenerate Case: $\|-\log(\vt{x})-\beta\vt{1}+\vt{\lambda}\|_{q^{*}}=0$}\label{sec:degenerate}

Suppose $\|-\log(\vt{x})-\beta\vt{1}+\vt{\lambda}\|_{q^{*}}=0$ at an optimal solution.
Equivalently, $-\log x_j-\beta+\lambda_j=0$ for all $j\in N$.
Then~\eqref{prob:q-DRO} simplifies to
  \begin{subequations} % one-column version
  \begin{align*}
    \underset{(\vt{x},\beta)}{\mini} \quad & \beta \\
    \st \quad & \sum_{j=1}^{n}x_{j}=1,\\
    & \lambda_{j}=\log{x}_{j}+\beta \ge 0 \quad (j \in N),\\
    & x_{j} \ge 0 \quad (j \in N).
  \end{align*}
  \end{subequations}
  This problem admits the explicit optimal solution $(x_{1},\ldots,x_{n},\beta)=(1/n, \ldots, 1/n, \log{n})$.
  Therefore, the estimator is the uniform distribution and Symmetry holds trivially, whereas Order Preservation does not hold in general.

\subsection{Non-degenerate case: $\|-\log(\vt{x})-\beta\vt{1}+\vt{\lambda}\|_{q^{*}}>0$}

Next, we discuss an optimal solution $(\vt{x}, \beta, \vt{\lambda})$ which satisfies the following non-degeneracy assumption.
\begin{ass}\label{ass:don-degen}
  At an optimal solution $(\vt{x},\beta,\vt{\lambda})$, we assume
$\|-\log(\vt{x})-\beta\vt{1}+\vt{\lambda}\|_{q^{*}}>0$.
\end{ass}

To investigate Symmetry and Order Preservation, we introduce the Lagrangian for $q$-DRO.
Let $\xi_{j} \ge 0$ be the Lagrangian multiplier for the constraint~\eqref{eq:q-DRO_con2} ($\lambda_{j} \ge 0$ for each $j \in N$), and $\gamma \in \bb{R}$ be the multiplier for the constraint~\eqref{eq:q-DRO_con1} ($\sum_{j=1}^{n}x_{j}=1$).
Note that the positivity of $x_{j}$ is guaranteed by Theorem~\ref{thm:Pos}, so we do not need to introduce multipliers for the constraints~\eqref{eq:q-DRO_con3} ($x_{j} \ge 0$) due to the complementarity condition.
The Lagrangian $L(\vt{x},\vt{\lambda},\beta,\vt{\xi},\gamma)$ for $q$-DRO is given by
\begin{align*} % one-column version
  L(\vt{x},\vt{\lambda},\beta,\gamma,\vt{\xi})
  &=\sum_{j=1}^{n}\hat{p}_{j}(-\log{x_{j}}+\lambda_{j})+\varepsilon \cdot \| -\log(\vt{x})-\beta\vt{1}+\vt{\lambda} \|_{q^{*}}+\gamma ( \sum_{j=1}^{n}x_{j}-1 )-\sum_{j=1}^{n}\xi_{j}\lambda_{j}.
\end{align*}
% \begin{align*} % two-column version
%   L(\vt{x},\vt{\lambda},\beta,\gamma,\vt{\xi})
%   &=\sum_{j=1}^{n}\hat{p}_{j}(-\log{x_{j}}+\lambda_{j})\\
%   &+\varepsilon \cdot \| -\log(\vt{x})-\beta\vt{1}+\vt{\lambda} \|_{q^{*}}\\
%   &+\gamma ( \sum_{j=1}^{n}x_{j}-1 )-\sum_{j=1}^{n}\xi_{j}\lambda_{j}.
% \end{align*}

For $q^{*}\in(1,\infty)$, the $q^{*}$-norm is differentiable except at $-\log(\vt{x})-\beta\vt{1}+\vt{\lambda}=\vt{0}$, which is excluded by Assumption~\ref{ass:don-degen}.
For $q\in\{1,\infty\}$, it is nonsmooth at certain points, so we adopt generalized KKT conditions with subgradients.

Since the inner function $(\vt{x},\beta,\vt{\lambda})\mapsto \vt{u}$ is continuously differentiable whenever $\vt{x}$ is positive and the outer function $\vt{u}\mapsto\|\vt{u}\|_{q^{*}}$ is convex,
the subdifferential chain rule is applicable (see, e.g., \citet{RW_VA1998}).
In particular, when $q\in(1,\infty)$ the subdifferential is a singleton (the gradient).

The generalized KKT conditions state that at an optimal solution $(\vt{x},\vt{\lambda},\beta)$, there exist multipliers $(\vt{\xi},\gamma)$ and a subgradient $\vt{y}^{(q^{*})}\in\partial\|\vt{u}\|_{q^{*}}$ such that the following conditions hold:
\begin{subequations}\label{eq:kkt}
\begin{align}
  -\frac{\hat{p}_j}{x_j} +\gamma - \frac{\varepsilon y_j^{(q^{*})}}{x_j}&=0 \quad (j \in N), \label{eq:kkt1}\\
  \hat{p}_{j}-\xi_{j}+\varepsilon y_j^{(q^{*})} &=0 \quad (j \in N), \label{eq:kkt2}\\
  \varepsilon \cdot \sum_{j=1}^{n} y_{j}^{(q^{*})}&=0,\label{eq:kkt3}\\
  \xi_{j}\cdot \lambda_{j} &=0 \quad (j \in N).\label{eq:kkt4}
\end{align}
\end{subequations}

For brevity, we omit the primal and dual feasibility conditions.
Note that the above KKT conditions are well-defined since Theorem~\ref{thm:Pos} ensures $x_j>0$.

We next give explicit subgradient forms used in our analysis.

- For $q^{*}=1$ (i.e., $q=\infty$), the subgradient $ \vt{y}^{(1)}\in\partial\|\vt{u}\|_{1}$ is given by:
\[
y_j^{(1)}\in
\begin{cases}
\{\mathrm{sgn}(u_j)\}, & u_j\neq0,\\
[-1,1], & u_j=0.
\end{cases}
\]

- For $q^{*}\in(1,\infty)$ (i.e., $q\in(1,\infty)$), the $q^{*}$-norm is differentiable at any $\vt{u} \neq \vt{0}$. Thus, under Assumption~\ref{ass:don-degen}, the subgradient $\vt{y}^{(q^{*})}$ is uniquely determined as the gradient:
\[
y_j^{(q^{*})}=\dfrac{|u_j|^{q^{*}-1}\mathrm{sgn}(u_j)}{\|\vt{u}\|_{q^{*}}^{q^{*}-1}}.
\]

- For $q^{*}=\infty$ (i.e., $q=1$), let $I(\vt{u})=\{j\in N:\ |u_j|=\|\vt{u}\|_{\infty}\}$.
Under Assumption~\ref{ass:don-degen}, the maximum absolute value $\|\vt{u}\|_{\infty}$ is positive, which implies $|u_j| > 0$ for all $j \in I(\vt{u})$. Consequently, $\mathrm{sgn}(u_j)$ is uniquely determined as either $1$ or $-1$ for any index $j \in I(\vt{u})$.
Then $\vt{y}^{(\infty)}\in\partial\|\vt{u}\|_{\infty}$ is given as
\[
y_j^{(\infty)}=
\begin{cases}
c_j\,\mathrm{sgn}(u_j), & j\in I(\vt{u}),\\
0, & j\notin I(\vt{u}),
\end{cases}
\]
where there exists coefficient $c_j\ge 0$ such that $\sum_{j\in I(\vt{u})}c_j=1$.

We now show the following key lemma.
\begin{lemma}\label{lem:lambda0}
  Let $q \in [1, \infty]$.
  In any optimal solution $(\vt{x},\vt{\lambda},\beta)$ to $q$-DRO,
  $\lambda_{j}=0$ for all categories $j \in N$. 
\end{lemma}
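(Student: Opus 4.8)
The plan is to read the conclusion directly off the stationarity and complementarity conditions in~\eqref{eq:kkt}, and to treat separately the degenerate configuration excluded by Assumption~\ref{ass:don-degen} (namely $t=0$), which is exactly where the gradient-based conditions are unavailable. The central observation is that summing the $\lambda_{j}$-stationarity condition over all categories pins down the simplex multiplier $\gamma$ exactly; once $\gamma$ is known, the claim is forced by complementarity together with the strict positivity of $\vt{x}$ guaranteed by Theorem~\ref{thm:Pos}.

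Concretely, I would first assume $t=\|-\log(\vt{x})-\beta\vt{1}+\vt{\lambda}\|_{q^{*}}>0$, so that~\eqref{eq:kkt1}--\eqref{eq:kkt4} are valid. Multiplying~\eqref{eq:kkt1} through by $x_{j}>0$ and comparing with~\eqref{eq:kkt2} yields the identity $\xi_{j}=\gamma x_{j}$ for every $j\in N$. Next I would sum~\eqref{eq:kkt2} over $j$: using $\sum_{j}v(x_{j},\beta,\lambda_{j})=0$ from~\eqref{eq:kkt3} together with $\sum_{j}\hat{p}_{j}=1$, this collapses to $\sum_{j}\xi_{j}=1$. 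Summing the identity $\xi_{j}=\gamma x_{j}$ and invoking $\sum_{j}x_{j}=1$ then gives $\gamma=1$. Hence $\xi_{j}=x_{j}>0$ for all $j$, and the complementarity condition~\eqref{eq:kkt4} forces $\lambda_{j}=0$.

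The remaining, genuinely delicate step is the case $t=0$, which is the configuration ruled out by Assumption~\ref{ass:don-degen} and in which the $q^{*}$-norm is nondifferentiable at the origin. Here I would argue directly from the objective rather than from~\eqref{eq:kkt}. If $t=0$, every component of $-\log(\vt{x})-\beta\vt{1}+\vt{\lambda}$ vanishes, so $\lambda_{j}=\beta+\log x_{j}$, the norm term $\varepsilon\,t$ disappears, and the objective~\eqref{eq:q-DRO_obj} collapses to $\sum_{j}\hat{p}_{j}\beta=\beta$. Feasibility $\lambda_{j}\ge 0$ forces $\beta\ge -\log(\min_{j}x_{j})\ge \log n$, whereas the uniform point $\vt{x}=(1/n,\ldots,1/n)$, $\vt{\lambda}=\vt{0}$, $\beta=\log n$ is feasible with objective exactly $\log n$. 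Optimality therefore forces $\beta=\log n$, which in turn forces $\vt{x}$ to be uniform and hence $\lambda_{j}=\beta+\log x_{j}=0$ for all $j$.

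I expect the main obstacle to be precisely this $t=0$ branch, since it is where Assumption~\ref{ass:don-degen} was originally needed and where the KKT machinery truly breaks. The key to circumventing it is that the $t=0$ locus is so rigid---the vanishing of $-\log(\vt{x})-\beta\vt{1}+\vt{\lambda}$ determines $\vt{\lambda}$ in terms of $\vt{x}$ and $\beta$---that the objective reduces to the single scalar $\beta$, permitting a direct comparison with the uniform distribution that uses no differentiability. A secondary point I would verify is that an optimizer exists and that $\sum_{j}\hat{p}_{j}=1$ is the ingredient driving both branches: in the $t>0$ branch it is what makes $\gamma=1$ exactly, and in the $t=0$ branch it is what reduces the objective to $\beta$.
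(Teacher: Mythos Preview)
Your proof is correct and follows essentially the same two-case strategy as the paper: in the $t>0$ case you combine~\eqref{eq:kkt1} (after multiplying by $x_{j}$) with~\eqref{eq:kkt2} and~\eqref{eq:kkt3} to obtain $\gamma=1$ and $\xi_{j}=x_{j}>0$, then invoke complementarity; in the $t=0$ case you reduce the objective to $\beta$ and compare with the uniform point. The only differences are cosmetic---you sum~\eqref{eq:kkt2} rather than the $x_{j}$-multiplied version of~\eqref{eq:kkt1} to pin down $\gamma$, and your $t=0$ branch makes the lower bound $\beta\ge\log n$ explicit rather than simply stating the reduced problem's optimum.
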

\begin{proof}
  For an optimal solution which does not satisfy Assumption~\ref{ass:don-degen}, the degenerate analysis in Section~\ref{sec:degenerate} yields the uniform solution $(x_1,\dots,x_n)=(1/n,\dots,1/n)$ and $\beta=\log n$.
  Hence, $\lambda_j=\log x_j+\beta=\log(1/n)+\log n=0$ for all $j \in N$.

  Assume now that Assumption~\ref{ass:don-degen} holds at an optimal solution.
  The stationarity condition~\eqref{eq:kkt1} of the KKT conditions with respect to $x_{j}$ yields
  \begin{align}
    -\hat{p}_{j}+\gamma x_{j}=\varepsilon y_{j}^{(q^{*})} \label{eq:kkt1-2}
  \end{align}
  for any $j \in N$.
  Summing up the above equation~\eqref{eq:kkt1-2} and together with \eqref{eq:kkt3}, we have
  \begin{align*}
    0=\varepsilon \sum_{j=1}^{n} y_{j}^{(q^{*})}=-\sum_{j=1}^{n}\hat{p}_{j}+\gamma \sum_{j=1}^{n}x_{j}=-1+\gamma,
  \end{align*}
  which implies $\gamma=1$.
  The sum of \eqref{eq:kkt2} and \eqref{eq:kkt1-2}, we have $\xi_{j}=x_{j}>0$ for any $j \in N$ owing to the positivity of $x_{j}$.
  Therefore, $\lambda_{j}$ equals to zero from the complementarity condition~\eqref{eq:kkt4}.
\end{proof}

From Lemma~\ref{lem:lambda0}, the subgradient $\vt{y}^{(q^{*})} \in \partial \|\vt{u}\|_{q^{*}}$ satisfies a monotonicity property with respect to the component $u_j=-\log{x_j}-\beta$.
Specifically, if $u_i<u_j$, then $y_i^{(q^{*})} \le y_j^{(q^{*})}$ holds for any $q \in [1,\infty]$.
Moreover, for $q \in (1,\infty)$, if $u_i \le u_j$, then $y_i^{(q^{*})} \le y_j^{(q^{*})}$.
The detailed proof of this monotonicity property is provided in~\ref{app:monotonicity}. Based on this monotonicity, we derive the following results.

\begin{thm}\label{thm:Sym}
  Let $q \in [1, \infty]$.
  Any $q$-DRO estimator $\vt{x}$ satisfies Symmetry.
\end{thm}

\begin{proof}
  First, consider an optimal solution that does not satisfy Assumption 1.
  As discussed in Section~\ref{sec:degenerate}, the optimal solution $\vt{x}$ corresponds to the uniform distribution, which trivially satisfies Symmetry.

  Next, we consider an optimal solution that satisfies Assumption 1. Suppose that $\vt{x}$ satisfies $x_{i}>x_{j}$ while $\hat{p}_{i}=\hat{p}_{j}$ for some $i,j \in N$.
  Taking the difference between the $i$-th and $j$-th equations of \eqref{eq:kkt1-2}, we obtain
  \begin{align}
  \varepsilon(y_i^{(q^{*})} - y_j^{(q^{*})})=(\hat{p}_j - \hat{p}_i) + (x_i-x_j)=x_i-x_j>0. \label{thm2}
  \end{align}

  On the other hand, from Lemma~\ref{lem:lambda0} and the assumption $x_i > x_j$, we have $-\log{x_i}-\beta=u_i < u_j=-\log{x_{j}}-\beta$.
  Due to the monotonicity of the subgradient, the inequality $u_i < u_j$ implies $y_i^{(q^*)} \le y_j^{(q^*)}$.
  This contradicts~\eqref{thm2}.
\end{proof}

\begin{thm}\label{thm:OP}
  Let $q\in(1,\infty)$. Under Assumption~\ref{ass:don-degen}, any $q$-DRO estimator $\vt{x}$ satisfies Order Preservation.
\end{thm}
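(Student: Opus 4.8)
The plan is to leverage the KKT characterization already in hand, combined with Lemma~\ref{lem:lambda0}, to express each estimated probability $x_j$ as an explicit, monotone perturbation of its empirical value $\hat{p}_j$, and then to read off Order Preservation from elementary monotonicity of two composed maps.

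First I would invoke Lemma~\ref{lem:lambda0} to set $\lambda_j = 0$ for all $j \in N$, and recall from its proof that the multiplier on the simplex constraint satisfies $\gamma = 1$. Substituting both into the stationarity condition~\eqref{eq:kkt1-2} yields, for each $j \in N$,
\begin{align*}
  x_j = \hat{p}_j + \frac{\varepsilon}{t^{q^{*}-1}}\, v(x_j,\beta,0),
\end{align*}
where, writing $u_j := -\log x_j - \beta$, we have $v(x_j,\beta,0) = |u_j|^{q^{*}-1}\,\mbox{sgn}(u_j) =: g(u_j)$. This identity says that the perturbation $e_j = x_j - \hat{p}_j$ equals $\tfrac{\varepsilon}{t^{q^{*}-1}}\, g(u_j)$.

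The two structural facts I would then use are: (i) the scalar map $g(u) = |u|^{q^{*}-1}\mbox{sgn}(u)$ is (strictly) increasing on $\bb{R}$ for $q^{*} \in (1,\infty)$, since its derivative $(q^{*}-1)|u|^{q^{*}-2}$ is positive off the origin and $g$ is continuous through $0$; and (ii) $x \mapsto -\log x$ is decreasing, so $u_j$ is a decreasing function of $x_j$. I would also note that Assumption~\ref{ass:don-degen} guarantees $t > 0$, hence the coefficient $\varepsilon/t^{q^{*}-1}$ is strictly positive, which is precisely what drives the argument. I would then proceed by contrapositive: assuming $x_i \ge x_j$, I aim to conclude $\hat{p}_i \ge \hat{p}_j$. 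Rearranging the displayed identity gives
\begin{align*}
  \hat{p}_i - \hat{p}_j = (x_i - x_j) - \frac{\varepsilon}{t^{q^{*}-1}}\bigl(g(u_i) - g(u_j)\bigr).
\end{align*}
If $x_i \ge x_j$, then $u_i \le u_j$ by (ii), whence $g(u_i) \le g(u_j)$ by (i); both terms on the right-hand side are therefore nonnegative, so $\hat{p}_i \ge \hat{p}_j$. The contrapositive of this statement is exactly Order Preservation: $\hat{p}_i < \hat{p}_j$ forces $x_i < x_j$.

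I expect the only delicate point to be the clean treatment of the self-reference, namely that the quantity $u_j$ governing the perturbation itself depends on $x_j$. The contrapositive formulation sidesteps this entirely, since it never requires solving the fixed-point equation for $x_j$; it only needs the joint monotonicity of the two composed maps $x \mapsto u \mapsto g(u)$. The strict positivity of $\varepsilon/t^{q^{*}-1}$, supplied by Assumption~\ref{ass:don-degen}, is the single place where the hypotheses are genuinely invoked, and it is what prevents the perturbation coefficient from vanishing and collapsing the ordering.
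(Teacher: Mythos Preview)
Your proof is correct and essentially identical to the paper's: both substitute $\lambda_j=0$ and $\gamma=1$ from Lemma~\ref{lem:lambda0} into~\eqref{eq:kkt1-2}, take the difference between indices $i$ and $j$, and use the monotonicity of $x\mapsto v(x,\beta,0)$ to derive a contradiction (equivalently, the contrapositive). The only cosmetic difference is that you decompose this monotonicity into the two steps $x\mapsto u_j$ and $u\mapsto g(u)$, whereas the paper simply asserts it via the sign of the partial derivative in a footnote.
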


\begin{proof}
Assume for contradiction that $\hat{p}_i<\hat{p}_j$ but $x_i\ge x_j$ for some $i,j\in N$.
Similar to the proof of Symmetry, taking the difference of~\eqref{eq:kkt1-2}, we obtain
\begin{align}
  \varepsilon\bigl(y_i^{(q^{*})}-y_j^{(q^{*})}\bigr)
  =(\hat{p}_j-\hat{p}_i)+(x_i-x_j) > 0. \label{thm3}
\end{align}
From the assumption $x_i \ge x_j$ and Lemma~\ref{lem:lambda0}, we have $-\log{x_i}-\beta=u_i \le u_j=-\log{x_{j}}-\beta$.

Here, under Assumption~\ref{ass:don-degen}, the $q$-norm is differentiable for $q \in (1, \infty)$, and the subgradient coincides with the gradient. Due to the monotonicity of the gradient, the inequality $u_i \le u_j$ implies $y_i^{(q^*)} \le y_j^{(q^*)}$, which contradicts~\eqref{thm3}.
\end{proof}

\textbf{Remark}. The above argument does not directly extend to the boundary cases, $q=1$ and $q=\infty$.
Nevertheless, a weaker form of Order Preservation can be established: if $\hat{p}_i<\hat{p}_j$, then $x_i\le x_j$ holds for any $q\in[1,\infty]$.
Furthermore, we construct counterexamples where Order Preservation fails for these boundary cases.
The details of these counterexamples are provided in Section~\ref{sec:discussion}.

We summarize the results in Table~\ref{tab:summary}.
\begin{table}[htb]
  \centering
  \caption{Axiomatic properties of $q$-DRO estimator.
  A check mark (\checkmark) indicates that the property is satisfied, and a cross (\xmark) indicates that it is not.}
  \begin{tabular}{cccccc}
    \toprule
    & Degenerate Case & & \multicolumn{2}{c}{Non-Degenerate Case} \\
    \cline{2-2} \cline{4-5}
    & $q \in [1, \infty]$ & & $q \in \{1,\infty\}$ & $q \in (1, \infty)$ \\
    \midrule
    %$\vt{\lambda}=\vt{0}$ & \checkmark & & \checkmark & \checkmark \\
    Positivity & \checkmark & & \checkmark & \checkmark \\
    Symmetry & \checkmark & & \checkmark & \checkmark \\
    Order Preservation & \xmark & & \xmark & \checkmark \\
    weak Order Preservation & \checkmark & & \checkmark & \checkmark \\
    \bottomrule
  \end{tabular}
  \label{tab:summary}
\end{table}

%---Discussion---%
\section{Discussion}\label{sec:discussion}
Our axiomatic analysis reveals that $q$-DRO estimators form a flexible class of smoothing rules.
The analysis in Section~\ref{sec:main_results} further clarifies the theoretical foundations, specifically addressing the validity of the non-degeneracy assumption and establishing the equivalence of the problem~\eqref{prob:q-DRO} to regularized empirical loss minimization.

\subsection{Validity of Assumption}

Assumption~\ref{ass:don-degen} (i.e., $\| -\log(\vt{x})-\beta\vt{1}+\vt{\lambda} \|_{q^{*}}>0$) was introduced as a technical condition to ensure the gradient of the $q^{*}$-norm is well-defined for $q \in (1, \infty)$.
We now show that this assumption is not merely technical, but reflects a natural property of the problem, by analyzing the KKT conditions of the inner worst-case problem~\eqref{prob:wc} from Section~\ref{sec:reformulation}.

Let $\nu$ be a nonnegative Lagrangian multiplier for the norm constraint~\eqref{eq:wc_con3} ($\|\vt{e}\|_{q} \le \varepsilon$).
The Lagrangian for the problem~\eqref{prob:wc} is given as
\begin{align*} % one-column version
  L(\vt{e},\vt{\lambda}, \beta, \nu)
  &=\sum_{j=1}^{n}(-\log{x}_{j}-\beta+\lambda_{j})e_{j}+\sum_{j=1}^{n}\hat{p}_{j}(-\log{x_{j}}+\lambda_{j})+\nu(\varepsilon-\|\vt{e}\|_{q}),
\end{align*}
% \begin{align*} % two-column version
%   L(\vt{e},\vt{\lambda}, \beta, \nu)
%   &=\sum_{j=1}^{n}(-\log{x}_{j}-\beta+\lambda_{j})e_{j}\\
%   &+\sum_{j=1}^{n}\hat{p}_{j}(-\log{x_{j}}+\lambda_{j})+\nu(\varepsilon-\|\vt{e}\|_{q}),
% \end{align*}
then the stationarity condition with respect to $e_{j}$ and the complementarity condition imply
\begin{subequations}\label{eq:wc_kkt}
  \begin{align}
  \partial_{e_{j}}L=(-\log{x}_{j}-\beta+\lambda_{j})-\nu d_j^{(q)}&=0 \quad (j \in N),\label{eq:wc_kkt1}\\
  \nu \cdot (\varepsilon-\|\vt{e}\|_{q})&=0 \label{eq:wc_kkt2},
\end{align}
\end{subequations}
where $\vt{d}^{(q)} \in \partial\|\vt{e}\|_{q}$ is a subgradient of the $q$-norm.

Suppose $\| -\log(\vt{x})-\beta\vt{1}+\vt{\lambda} \|_{q^{*}}>0$. If $\nu$ were zero, then equation~\eqref{eq:wc_kkt1} would imply $-\log x_{j}-\beta+\lambda_j=0$ for all $j \in N$, which contradicts the assumption that the norm is positive. Therefore, $\nu$ must be positive.

On the other hand, if $\nu>0$, then $\|\vt{e}\|_{q}=\varepsilon$ from~\eqref{eq:wc_kkt2}, which implies $\vt{e}\neq \vt{0}$.
Since $\vt{e}$ is nonzero, its subgradient $\vt{d}^{(q)} \in \partial\|\vt{e}\|_{q}$ must be nonzero as well.
Specifically, there exists at least one category $j \in N$ such that $d^{(q)}_j \neq 0$.
Hence, $\|-\log(\vt{x})-\beta\vt{1}+\vt{\lambda} \|_{q^{*}}>0$ from~\eqref{eq:wc_kkt1}.

This analysis reveals that Assumption 1 is equivalent to the condition $\nu > 0$.
By the complementarity condition~\eqref{eq:wc_kkt2},
$\nu > 0$ implies that the norm constraint is active,
i.e., $\|\vt{e}\|_q = \varepsilon$. This means the adversary perturbs the distribution to the maximum allowed radius $\varepsilon$.

This confirms that the assumption is not merely technical, but reflects a natural characteristic of the problem setting: it holds whenever the robustness radius $\varepsilon$ is not set to a value so large that the solution is forced to be uniform. In other words, the assumption remains valid as long as the player trusts the empirical distribution $\hat{\vt{p}}$ to some extent as an anchor for smoothing.

\subsection{Equivalence to Regularized Empirical Loss Minimization}

From Lemma~\ref{lem:lambda0}, we obtain that the dual variables $\lambda_{j}$ are zero for any $j \in N$.
This implies that the objective function of $q$-DRO simplifies:
\begin{align*}
  \sum_{j=1}^{n}\hat{p}_{j}(-\log{x_{j}})+\varepsilon \cdot \| -\log(\vt{x})-\beta\vt{1} \|_{q^{*}},
\end{align*}
which is a form of regularized empirical loss minimization.
The term $\sum_{j \in N} \hat{p}_j(-\log x_j)$ corresponds to the empirical cross-entropy loss, while the norm term $\varepsilon \cdot \| -\log(\vt{x})-\beta\vt{1} \|_{q^{*}}$ acts as a regularizer.
This regularizer has a clear interpretation.
The variable $\beta$ acts as a baseline (reference) value for the cost ($-\log x_j$) of all categories.
The regularization term then penalizes the deviation of each category's cost from this baseline value $\beta$.

\subsection{Analysis of Boundary Cases: $q=1$ and $q=\infty$}
We numerically investigated the Order Preservation for the boundary cases where the proof in Theorem~\ref{thm:OP} does not directly apply due to the lack of strict monotonicity in the subgradients.

\paragraph{Case~1: $q=1$ ($q^{*}=\infty$)}
We examined the instance with $n=4$, $\hat{\vt{p}} = (0.00, 0.07, 0.465, 0.465)^{\top}$, and $\varepsilon=0.3$.
The conic optimization solver MOSEK returned a solution $\vt{x} = (0.11, 0.11, 0.39, 0.39)^{\top}$.
In this solution, we observe $x_1 = x_2$ despite the strict inequality in the empirical distribution ($\hat{p}_1 < \hat{p}_2$).
We verified that this solution $\vt{x}$ satisfies the optimality conditions of the reformulated convex problem of $1$-DRO.
Thus, this is a definitive counterexample to Order Preservation.

\paragraph{Case~2: $q=\infty$ ($q^{*}=1$)}
Similarly, the case for $q=\infty$ provides a counterexample.
We considered $n=4$, $\hat{\vt{p}} = (0.0, 0.2, 0.3, 0.5)^{\top}$, and $\varepsilon=0.2$. MOSEK returned the solution $\vt{x} = (0.20, 0.25, 0.25, 0.30)^{\top}$.
Here, we observe $x_2 = x_3 = 0.25$ even though $\hat{p}_2 < \hat{p}_3$. We also verified that this solution $\vt{x}$ satisfies the optimality conditions of the reformulated convex problem of $\infty$-DRO.
This confirms that strict Order Preservation does not hold for $q=\infty$.

This result is a direct consequence of the sparsity-inducing property of the dual $1$-norm regularizer.
Minimizing the $1$-norm encourages multiple components of the vector $-\log \vt{x} - \beta \vt{1}$ to become exactly zero simultaneously (i.e., $-\log x_j = \beta$).
This implies $x_2 = x_3 = e^{-\beta}$.
Thus, the $1$-norm actively suppresses the differences in empirical frequencies, assigning identical probabilities to distinct categories.

%---Numerical Examples--%
\section{Numerical Experiments}\label{sec:numerical_examples}

This section presents numerical experiments to validate our theoretical findings.
Specifically, we conduct the following experiments:
(i) numerically confirm that the $q$-DRO estimator satisfies the axioms for various values of $q$,
(ii) examine the effect of the robustness radius $\varepsilon$ on the optimal solution.
All experiments are implemented in Python using MOSEK as the conic optimization solver~\citep{Mosek_11}.

\subsection{Experiment~1: Validation of Axiomatic Properties}

We first numerically confirm that the $q$-DRO estimator for $q \in \{1.5, 2, 3\}$ satisfies the axioms of Positivity, Symmetry, and Order Preservation.
We set $n=5$ and the robustness radius $\varepsilon=0.2$.
The empirical distribution is set as $\hat{\vt{p}} = (0.00, 0.15, 0.15, 0.30, 0.40)^{\top}$ to test all axioms.
This distribution includes a zero-frequency category ($\hat{p}_1=0$), categories with identical frequencies ($\hat{p}_2 = \hat{p}_3$), and categories with distinct frequencies ($\hat{p}_1 < \hat{p}_2 < \hat{p}_4 < \hat{p}_5$).

Let $\vt{x}^{(q)}$ denote the optimal solution of $q$-DRO.
The optimal solutions for $q \in \{1.5, 2, 3\}$ are as follows:
\begin{align*}
\vt{x}^{(1.5)} &= (0.1216, 0.1643, 0.1643, 0.2548, 0.2950)^{\top},\\
\vt{x}^{(2)} &= (0.1342, 0.1792, 0.1792, 0.2332, 0.2742)^{\top},\\
\vt{x}^{(3)} &= (0.1542, 0.1927, 0.1927, 0.2123, 0.2481)^{\top}.
\end{align*}

These results confirm our theoretical findings:
(i) the zero-frequency category $\hat{p}_1=0.00$ is assigned a positive probability $x_1^{(q)}>0$,
(ii) the categories with identical empirical frequencies, $\hat{p}_2 = \hat{p}_3 = 0.15$, receive equal probabilities, $x_2^{(q)} = x_3^{(q)}$,
and (iii) the input order $\hat{p}_1 < \hat{p}_2  < \hat{p}_4 < \hat{p}_5$ is strictly preserved in the output: $x_1^{(q)} < x_2^{(q)} < x_4^{(q)} < x_5^{(q)}$ for all $q \in \{1.5, 2, 3\}$.

\subsection{Experiment~2: Sensitivity Analysis}

Next, we analyze the effect of the robustness radius $\varepsilon$ (regularization strength).
We use $n=4$ categories and a simple empirical distribution $
\hat{\vt{p}} = (0.10, 0.20, 0.30, 0.40)^{\top}.$
We fix $q=2$ and vary $\varepsilon$ from 0.0 to 0.3.
\begin{figure}[tbhp]
  \centering
  \includegraphics[scale=0.5]{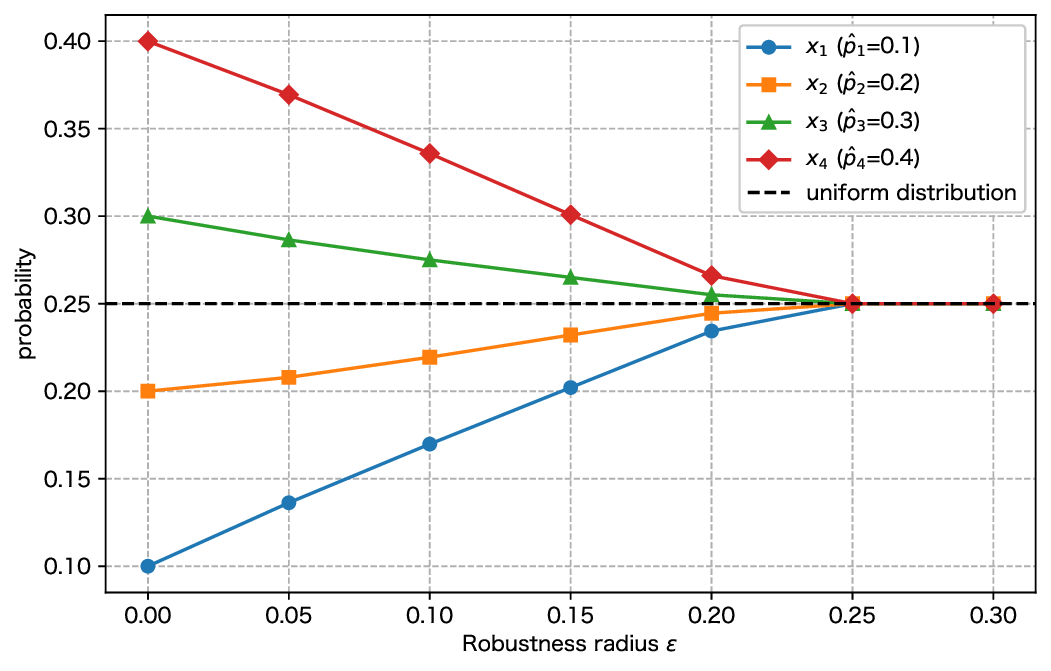}
  \caption{Sensitivity of $2$-DRO estimator $\vt{x}$ to the robustness radius $\varepsilon$.}
  \label{fig:exp3a}
\end{figure}

Figure~\ref{fig:exp3a} illustrates how each component $x_j$ changes as the robustness radius $\varepsilon$ varies.
For every category, the corresponding curve shows the trajectory of the estimated probability $x_j$ as the regularization strength increases.

At $\varepsilon=0$, the solution is identical to the empirical distribution, $\vt{x} = \hat{\vt{p}}$.
As $\varepsilon$ increases, the probabilities shrink toward the uniform distribution ($p_j=0.25$).
This confirms that $\varepsilon$ controls the trade-off between fitting to the data $\hat{\vt{p}}$ and robustness (regularization towards uniformity).

%---Conclusion---%
\section{Conclusion and Future Work}\label{sec:conclusion}

This paper analyzed the axiomatic properties of probability estimators derived from distributionally robust optimization with 
$q$-norm ambiguity sets.
We established that the resulting $q$-DRO estimator satisfies Positivity and Symmetry for all $q \in [1,\infty]$, and further proved that Order Preservation holds for all $q \in (1,\infty)$ under a mild non-degeneracy assumption.
Our analysis of the KKT conditions clarified how the structure of the dual variables leads to a clear interpretation of the $q$-DRO formulation as a form of regularized empirical loss minimization.

Directions for future work are as follows.
First, investigating the geometric and statistical meaning of the regularization term, and its Bayesian interpretation, would be a valuable extension.
Second, it would be valuable to investigate the behavior of DRO estimators under other types of ambiguity sets, such as those defined by the Wasserstein distance~\citep{MK_MM2018}.
Comparing the axiomatic properties of these variants with $q$-DRO could provide a more comprehensive understanding of robust smoothing techniques.

\section*{Acknowledgement}
This work is supported by JSPS Grant-in-Aid (22K17856).

\bibliographystyle{abbrvnat}
\bibliography{ref}

\newpage

\appendix

\section{Convex Reformulation of $q$-DRO}\label{app:reformulation}

We provide a detailed derivation of the convex reformulation of \eqref{prob:q-DRO}.
We first introduce auxiliary variable $s_j$ to represent the logarithmic term $-\log{x_j}$, and rewrite the problem as
\begin{align*}
  \underset{(\vt{x},\vt{\lambda},\beta,\vt{s})}{\mini} \quad & \sum_{j=1}^{n}\hat{p}_{j}(s_j+\lambda_{j}) + \varepsilon \cdot \| \vt{s}-\beta\vt{1}+\vt{\lambda} \|_{q}^{\bullet}\\
  \st \quad & \sum_{j=1}^{n}x_{j}=1,\\
  & -\log{x_j}\le s_j \quad (j \in N),\\
  & \lambda_{j} \ge 0 \quad (j \in N),\\
  & x_{j} \ge 0 \quad (j \in N).
\end{align*}

This problem is convex since the objective function is the sum of a linear term and a convex norm term and the constraint $-\log{x_j} \le s_j$ defines a convex set representable via exponential cone constraints.

We now show that the constraint $-\log{x_j} \le s_j$ holds with equality for all $j \in N$ at any optimal solution.
Suppose for contradiction that at an optimal solution $(\vt{x},\vt{\lambda},\beta,\vt{s})$, there exists some $j \in N$ such that $-\log{x_j} < s_j$.
Then, since $x_j\ge \exp(-s_j)$ holds with strict inequality for at least one index, we have $M=\sum_{j=1}^{n}\exp(-s_j)<\sum_{j=1}^{n}x_j=1$.
Then, we construct another solution $(\bar{\vt{x}},\bar{\vt{\lambda}},\bar{\beta},\bar{\vt{s}})$ by shifting as follows:
\begin{align*}
  \bar{s}_j=s_j-\delta, \quad -\log{\bar{x}_j}=\bar{s}_j, \quad \bar{\lambda}_j=\lambda_j, \quad \bar{\beta}=\beta-\delta,
\end{align*}
where $\delta=\log(1/M)>0$.
Since 
\begin{align*}
  \bar{x}_j=\exp(-\bar{s}_j)>0 \text{ and }
  \sum_{j=1}^{n}\bar{x}_j=\sum_{j=1}^{n}\exp(-\bar{s}_j)=\sum_{j=1}^{n}\exp(-s_j+\delta)=M\cdot \exp(\delta)=1,
\end{align*}
the solution $(\bar{\vt{x}},\bar{\vt{\lambda}},\bar{\beta},\bar{\vt{s}})$ is feasible.
The objective value at $(\bar{\vt{x}},\bar{\vt{\lambda}},\bar{\beta},\bar{\vt{s}})$ is 
\begin{align*}
  \sum_{j=1}^{n}\hat{p}_{j}(\bar{s}_j+\bar{\lambda}_{j}) + \varepsilon \cdot \| \bar{\vt{s}}-\bar{\beta}\vt{1}+\bar{\vt{\lambda}} \|_{q}^{\bullet}
  &=\sum_{j=1}^{n}\hat{p}_{j}(s_j+\lambda_{j}) -\delta + \varepsilon \cdot \| \vt{s}-\beta\vt{1}+\vt{\lambda} \|_{q}^{\bullet}\\
  &<\sum_{j=1}^{n}\hat{p}_{j}(s_j+\lambda_{j}) + \varepsilon \cdot \| \vt{s}-\beta\vt{1}+\vt{\lambda} \|_{q}^{\bullet},
\end{align*}
which contradicts the optimality of $(\vt{x},\vt{\lambda},\beta,\vt{s})$.
Thus, at an optimal solution, we have $-\log{x_j}=s_j$ for all $j \in N$.

\newpage
\section{Monotonicity of Subgradients}\label{app:monotonicity}

We prove the monotonicity of the subgradients $\vt{y}^{(q^{*})} \in \partial \|\vt{u}\|_{q^{*}}$ with respect to the components $u_j=-\log{x_j}-\beta$ under Assumption~\ref{ass:don-degen}.

For $q^{*}=1$, the subgradient $\vt{y}^{(1)}$ is given by
\[y_j^{(1)}\in
\begin{cases}
\{\mathrm{sgn}(u_j)\}, & u_j\neq0,\\
[-1,1], & u_j=0.
\end{cases}\]
Assume $u_i < u_j$.
We examine all possible cases for $u_i$ and $u_j$.
\begin{itemize}
  \item When $u_i < 0 < u_j$, we have $y_i^{(1)} = -1 < 1 = y_j^{(1)}$.
  \item When $u_i < u_j < 0$, we have $y_i^{(1)} = -1 = y_j^{(1)}$.
  \item When $0 < u_i < u_j$, we have $y_i^{(1)} = 1 = y_j^{(1)}$.
  \item When $u_i = 0 < u_j$, we have $y_i^{(1)} =c \le 1 = y_j^{(1)}$ for some $c \in [-1,1]$.
  \item When $u_i < 0 = u_j$, we have $y_i^{(1)} = -1 \le c=y_j^{(1)} $ for some $c \in [-1,1]$.
\end{itemize}
Thus, in all cases, we have $y_i^{(1)} \le y_j^{(1)}$ when $u_i < u_j$.

For $q^{*} \in (1, \infty)$, the subgradient coincides with the gradient:
\[y_j^{(q^{*})}=\dfrac{|u_j|^{q^{*}-1}\mathrm{sgn}(u_j)}{\|\vt{u}\|_{q^{*}}^{q^{*}-1}}.\]
Assume $u_i \le u_j$.
\begin{itemize}
  \item When $u_i < u_j$, we have $|u_i|^{q^{*}-1}\mathrm{sgn}(u_i) < |u_j|^{q^{*}-1}\mathrm{sgn}(u_j)$, which implies $y_i^{(q^{*})} < y_j^{(q^{*})}$.
  \item When $u_i = u_j$, we have $y_i^{(q^{*})} = y_j^{(q^{*})}$.
\end{itemize}
Thus, in all cases, we have $y_i^{(q^{*})} \le y_j^{(q^{*})}$ when $u_i \le u_j$.

For $q^{*} = \infty$, let $I(\vt{u})=\{j\in N:\ |u_j|=\|\vt{u}\|_{\infty}\}$, the subgradient $\vt{y}^{(\infty)}$ is given by
\[y_j^{(\infty)}=
\begin{cases}
c_j\,\mathrm{sgn}(u_j), & j\in I(\vt{u}),\\
0, & j\notin I(\vt{u}),
\end{cases}\]
where there exists coefficient $c_j\ge0$ such that $\sum_{j\in I(\vt{u})}c_j=1$.
Assume $u_i < u_j$.
\begin{itemize}
  \item When $i,j \in I(\vt{u})$, the only possibility is $u_i=-\|\vt{u}\|_{\infty}$ and $u_j=\|\vt{u}\|_{\infty}$.
    Thus, we have $y_i^{(\infty)}=c_i\,\mathrm{sgn}(u_i)=-c_i$ and $y_j^{(\infty)}=c_j\,\mathrm{sgn}(u_j)=c_j$ for some $c_i,c_j \ge 0$.
    This implies $y_i^{(\infty)}=-c_{i} \le c_{j}=y_j^{(\infty)}$.
  \item When $i \in I(\vt{u})$ and $j \notin I(\vt{u})$, it follows that $y_i^{(\infty)}=c_i\,\mathrm{sgn}(u_i)$ for some $c_i \ge 0$ and $y_j^{(\infty)}=0$.
    Since $u_i < u_j$, we have $\mathrm{sgn}(u_i) =-1$, which implies $y_i^{(\infty)}=-c_{i} \le 0 = y_j^{(\infty)}$. 
  \item When $i \notin I(\vt{u})$ and $j \in I(\vt{u})$, it follows that $y_i^{(\infty)}=0$ and $y_j^{(\infty)}=c_j\,\mathrm{sgn}(u_j)$ for some $c_j \ge 0$.
    Since $u_i < u_j$, we have $\mathrm{sgn}(u_j) = 1$, which implies $y_i^{(\infty)}=0 \le c_j=y_j^{(\infty)}$.
  \item When $i,j \notin I(\vt{u})$, we have $y_i^{(\infty)}=0$ and $y_j^{(\infty)}=0$, which implies $y_i^{(\infty)} = y_j^{(\infty)}$.
\end{itemize}
Thus, in all cases, we have $y_i^{(\infty)} \le y_j^{(\infty)}$ when $u_i < u_j$.
\end{document}